\documentclass[11pt]{article}

\usepackage[a4paper,margin=1in]{geometry}
\usepackage[T1]{fontenc}
\usepackage{lmodern}
\usepackage{microtype}

\usepackage{amsmath,amssymb,amsthm,mathtools}
\numberwithin{equation}{section}

\usepackage{graphicx}
\usepackage{booktabs}
\usepackage{multirow}

\usepackage{enumitem}
\usepackage{xcolor}
\usepackage[hidelinks]{hyperref}
\usepackage[nameinlink,noabbrev]{cleveref}

\newtheorem{theorem}{Theorem}[section]
\newtheorem{lemma}[theorem]{Lemma}
\newtheorem{proposition}[theorem]{Proposition}

\theoremstyle{definition}
\newtheorem{definition}[theorem]{Definition}

\title{\bfseries A Nonlinear Deficiency Identity for the Riemann Zeta Function with Optimal Approximation Rates}

\author{
Meisam Mohammady\\[4pt]
Department of Computer Science\\
Iowa State University\\
Ames, Iowa, USA\\
\texttt{meisam@iastate.edu}
}

\date{\today}

\begin{document}

\maketitle

\begin{abstract}
We introduce a deficiency-based representation and approximation framework for values of the Riemann zeta function. The method is based on comparing two nonlinear accumulation mechanisms: global transformation of a base partial sum and local transformation of each term. Their gap defines a cumulative deficiency functional that yields the exact identity
\[
\zeta(q)=\zeta(p)^{q/p}-D_{\infty}^{(p,q)}, \qquad q>p>1.
\]

This converts zeta approximation into estimation of a nonlinear deficit. We derive corrected estimators that remove first-order bias and prove the convergence law
\[
B_n^{(p,q)}-\zeta(q)=O\!\left(n^{-\min(2p-2,q-1)}\right).
\]

For odd targets, suitable choices of the base exponent recover the natural truncation rate while preserving the structural identity. Numerical experiments for $\zeta(3),\zeta(5),\zeta(7)$ confirm theory, demonstrate strong finite-sample behavior, and illustrate extension to spectral zeta functions. The contribution is structural rather than replacing classical Euler--Maclaurin methods: we provide a unified nonlinear viewpoint on zeta approximation, convexity-induced correction terms, and tunable approximation families.
\end{abstract}
\maketitle
\section{Introduction}

The Riemann zeta function
\[
\zeta(s)=\sum_{n=1}^{\infty}\frac{1}{n^s},
\qquad \Re(s)>1,
\]
is one of the central objects of modern mathematics. It occupies a fundamental position in analytic number theory, while also appearing in probability, spectral geometry, mathematical physics, and asymptotic analysis. Through its Euler product it encodes prime factorization, and through its analytic continuation and functional equation it connects arithmetic with complex analysis.

Among the many questions surrounding \(\zeta(s)\), the values at positive integers remain especially prominent. At even integers, Euler's classical formula gives
\[
\zeta(2m)=(-1)^{m+1}\frac{B_{2m}(2\pi)^{2m}}{2(2m)!},
\]
so that quantities such as
\[
\zeta(2)=\frac{\pi^2}{6},
\qquad
\zeta(4)=\frac{\pi^4}{90},
\qquad
\zeta(6)=\frac{\pi^6}{945}
\]
are explicitly known. In contrast, odd values
\[
\zeta(3),\zeta(5),\zeta(7),\dots
\]
remain far less understood. Although important arithmetic results are known---most famously Apéry's proof of the irrationality of \(\zeta(3)\)---there is no comparable closed-form theory for odd positive integers.

From the numerical viewpoint, approximation of zeta values is already classical and highly developed. Direct truncation of the Dirichlet series, Euler--Maclaurin summation, accelerated series expansions, contour methods, and functional-equation-based algorithms all provide effective tools. In particular, Euler--Maclaurin methods can generate arbitrarily high-order corrections and remain among the strongest general-purpose approaches. Accordingly, the purpose of the present work is not to replace these classical methods asymptotically, but to introduce a different structural viewpoint.

Our starting point is a simple convexity principle: nonlinear transformation rewards aggregation. If \(\alpha>1\), then
\[
(a+b)^\alpha>a^\alpha+b^\alpha,
\qquad a,b>0.
\]
More generally,
\[
\left(\sum_{k=1}^{n}a_k\right)^\alpha
\ge
\sum_{k=1}^{n}a_k^\alpha.
\]
Thus applying a convex power after combining positive contributions yields a larger value than applying the same power termwise before summation.

We apply this idea to the sequence \(k^{-p}\), where \(p>1\), and compare two natural constructions for a target exponent \(q>p\). Let
\[
S_n^{(p)}=\sum_{k=1}^{n}\frac{1}{k^p}.
\]
Then one may either aggregate first and transform,
\[
\left(S_n^{(p)}\right)^{q/p},
\]
or transform termwise and sum,
\[
\sum_{k=1}^{n}\left(k^{-p}\right)^{q/p}
=
\sum_{k=1}^{n}\frac{1}{k^q}.
\]
Since \(q/p>1\), convexity implies
\[
\sum_{k=1}^{n}\frac{1}{k^q}
\le
\left(S_n^{(p)}\right)^{q/p}.
\]

The discrepancy between these two quantities is central to this paper:
\[
D_n^{(p,q)}
=
\left(S_n^{(p)}\right)^{q/p}
-
\sum_{k=1}^{n}\frac{1}{k^q},
\]
which we call the \emph{deficiency functional}. It measures the loss induced by fragmenting the mass before nonlinear accumulation.

Passing to the limit yields the exact identity
\[
\zeta(q)=\zeta(p)^{q/p}-D_\infty^{(p,q)}.
\]
This provides a new interpretation of zeta values: the target quantity is expressed as an ideal nonlinear aggregate of a simpler base sequence, corrected by an accumulated deficiency term.

The identity also leads naturally to approximation schemes. Truncating the infinite deficiency and correcting the leading bias yields estimators \(B_n^{(p,q)}\) satisfying
\[
B_n^{(p,q)}-\zeta(q)
=
O\!\left(n^{-\min(2p-2,\;q-1)}\right).
\]
Hence the base exponent \(p\) becomes a tunable design parameter. For fixed \(q\), increasing \(p\) improves the nonlinear-tail contribution until the natural truncation barrier \(q-1\) is reached. In particular, every
\[
p\ge \frac{q+1}{2}
\]
attains the maximal exponent \(q-1\). For odd targets \(q=2m+1\), the explicit choice
\[
p=q-1=2m
\]
is especially convenient because \(\zeta(2m)\) is known in closed form.

Beyond the classical Riemann zeta function, the same principle extends to spectral zeta functions
\[
\zeta_L(s)=\sum_{k=1}^{\infty}\lambda_k^{-s},
\]
associated with positive eigenvalue sequences \(\{\lambda_k\}\). Thus the framework applies more broadly to operator spectra and generalized Dirichlet-type sums.

The contribution of this paper is therefore primarily structural rather than algorithmic. We introduce a nonlinear deficiency identity, derive a tunable family of estimators with rigorous convergence guarantees, and show that convexity-based aggregation yields a unified mechanism for approximation across both classical and spectral settings.

\paragraph{Main Contributions.}
Specifically, we provide:

\begin{itemize}
\item an exact identity
\[
\zeta(q)=\zeta(p)^{q/p}-D_\infty^{(p,q)},
\]

\item bias-corrected estimators with rate
\[
B_n^{(p,q)}-\zeta(q)=O\!\left(n^{-\min(2p-2,\;q-1)}\right),
\]

\item characterization of optimal choices of the base exponent \(p\),

\item numerical validation for \(\zeta(3),\zeta(5),\zeta(7)\),

\item extension of the method to spectral zeta functions.
\end{itemize}

The remainder of the paper is organized as follows. Section~2 reviews relevant background and classical approximation methods. Section~3 develops the deficiency framework and proves the main results. Section~4 presents numerical experiments. Section~5 discusses implications and limitations. Section~6 concludes.

\section{Background and Related Work}

This section reviews the classical analytic setting of the Riemann zeta function, standard approximation methods, and the mathematical principles most relevant to the deficiency framework developed later. Standard references include the classical monographs of Titchmarsh, Edwards, Apostol, and Ivić \cite{titchmarsh1986zeta,edwards1974riemann,apostol1976analytic}.

\subsection{The Classical Riemann Zeta Function}

For complex \(s\) with \(\Re(s)>1\), the Riemann zeta function is defined by the absolutely convergent Dirichlet series
\[
\zeta(s)=\sum_{n=1}^{\infty}\frac{1}{n^s}.
\]

It also admits the Euler product
\[
\zeta(s)=\prod_{p\ \mathrm{prime}}\left(1-p^{-s}\right)^{-1},
\]
which reflects the fundamental theorem of arithmetic and connects additive summation with multiplicative prime structure \cite{apostol1976analytic,hardywright2008}.

The function extends meromorphically to the entire complex plane, with a single simple pole at \(s=1\), and satisfies the functional equation
\[
\zeta(s)=2^s\pi^{\,s-1}\sin\!\left(\frac{\pi s}{2}\right)\Gamma(1-s)\zeta(1-s).
\]

These structural properties make \(\zeta(s)\) one of the foundational objects in analytic number theory \cite{riemann1859,titchmarsh1986zeta}.

\subsection{Values at Positive Integers}

The behavior of \(\zeta(s)\) at positive integers is sharply divided into even and odd cases.

At even integers, Euler's formula gives
\[
\zeta(2m)=(-1)^{m+1}\frac{B_{2m}(2\pi)^{2m}}{2(2m)!},
\]
where \(B_{2m}\) are Bernoulli numbers \cite{apostol1976analytic}. For example,
\[
\zeta(2)=\frac{\pi^2}{6},
\qquad
\zeta(4)=\frac{\pi^4}{90},
\qquad
\zeta(6)=\frac{\pi^6}{945}.
\]

In contrast, odd values
\[
\zeta(3),\zeta(5),\zeta(7),\dots
\]
do not possess analogous closed forms. Important arithmetic results are known---most famously Apéry's theorem proving the irrationality of \(\zeta(3)\) \cite{apery1979irrationalite}---but no complete structural theory comparable to the even case exists.

This asymmetry motivates continued interest in representations and approximations for odd zeta values \cite{borwein2013pi}.

\subsection{Direct Truncation of the Dirichlet Series}

The simplest numerical approximation is the partial sum
\[
T_n^{(q)}:=\sum_{k=1}^{n}\frac{1}{k^q},
\qquad q>1.
\]

Its error is the tail
\[
R_n^{(q)}:=\zeta(q)-T_n^{(q)}
=
\sum_{k=n+1}^{\infty}\frac1{k^q}.
\]

By standard integral comparison,
\[
\frac{1}{(q-1)(n+1)^{q-1}}
\le
R_n^{(q)}
\le
\frac{1}{(q-1)n^{q-1}},
\]
and therefore
\[
R_n^{(q)}
=
\frac{1}{(q-1)n^{q-1}}+O(n^{-q}).
\]

Thus direct truncation naturally converges at rate
\[
O(n^{-(q-1)}).
\]

For large \(q\), this decay is already fast; for smaller exponents such as \(q=2\) or \(q=3\), convergence is slower \cite{knopp1990theory}.

\subsection{Euler--Maclaurin Summation}

A classical acceleration technique is Euler--Maclaurin summation, which expands the tail asymptotically:

\[
\zeta(q)
=
\sum_{k=1}^{n}\frac1{k^q}
+
\frac{n^{1-q}}{q-1}
+
\frac{1}{2n^q}
+
\sum_{m=1}^{M}
\frac{B_{2m}}{(2m)!}
f_{m,q}(n)
+
R_M,
\]
where \(f_{m,q}(n)\) are explicit derivative-based correction terms.

For example, when \(q=3\),
\[
\zeta(3)
=
\sum_{k=1}^{n}\frac1{k^3}
+
\frac{1}{2n^2}
+
\frac{1}{2n^3}
+
O(n^{-4}).
\]

Euler--Maclaurin methods can achieve arbitrarily high asymptotic order and remain among the strongest generic tools for zeta computation \cite{olver2010nist,whittakerwatson1996}.

The present paper does not attempt to outperform such expansions asymptotically. Rather, it develops an alternative nonlinear framework with different structural advantages.

\subsection{Accelerated Series and Functional Methods}

Many additional approaches exist:

\begin{itemize}
\item rapidly convergent Apéry-type and Borwein-type series for special constants,
\item contour integral methods,
\item use of the functional equation,
\item binary splitting and multiprecision algorithms,
\item modular-form-based identities,
\item polylogarithmic or Mellin-transform accelerations.
\end{itemize}

Representative examples include experimental and computational approaches in \cite{borwein2013pi} and convergence acceleration methods such as \cite{cohen2000computational}.

These methods are often highly specialized and powerful. Our goal is not to compete with the strongest of them numerically, but to introduce a transferable identity-based mechanism.

\subsection{Convexity and Nonlinear Aggregation}

The deficiency framework rests on a simple convexity principle.

Let \(\alpha>1\). Then the map
\[
x\mapsto x^\alpha
\]
is convex on \((0,\infty)\). Hence for positive \(a,b\),
\[
(a+b)^\alpha>a^\alpha+b^\alpha.
\]

More generally, for nonnegative numbers \(a_1,\dots,a_n\),
\[
\left(\sum_{k=1}^{n}a_k\right)^\alpha
\ge
\sum_{k=1}^{n}a_k^\alpha.
\]

This inequality states that nonlinear transformation rewards prior aggregation and is closely related to standard convexity and majorization principles.

The present paper applies this principle to the sequence
\[
a_k=k^{-p},
\]
leading to comparison between

\[
\left(\sum_{k=1}^{n}k^{-p}\right)^{q/p}
\quad\text{and}\quad
\sum_{k=1}^{n}k^{-q}.
\]

Their gap becomes the deficiency functional introduced in the next section.

\subsection{Why a New Framework?}

Even though classical numerical methods are strong, there remain several motivations for a new viewpoint:

\begin{enumerate}[label=(\roman*), leftmargin=20pt]
\item unify many target exponents \(q\) through one base sequence,

\item express target values through nonlinear transformations of simpler sums,

\item obtain tunable approximation families indexed by \(p\),

\item expose geometric or convexity-based interpretations,

\item extend naturally to spectral zeta functions
\[
\zeta_L(s)=\sum_{k=1}^{\infty}\lambda_k^{-s}.
\]
\end{enumerate}

Spectral zeta functions arise broadly in geometry and physics; see \cite{elizalde1994zeta}.

These goals motivate the deficiency framework developed next.

\subsection{Positioning of This Work}

To place the present contribution accurately:

\begin{itemize}
\item It is \emph{not} primarily a fastest-known algorithm for computing \(\zeta(q)\).

\item It is a new representation and approximation paradigm based on nonlinear aggregation.

\item Its main novelty is the exact identity
\[
\zeta(q)=\zeta(p)^{q/p}-D_\infty^{(p,q)},
\]
together with rigorous convergence laws and transferability to broader Dirichlet-type and spectral sums.
\end{itemize}

The next section develops this framework formally.

\section{Methodology: Deficiency Framework, Approximation Theory, and Extensions}
\label{sec:methodology}

This section develops the complete mathematical framework of the paper. We first introduce the deficiency identity underlying the method, then derive approximation estimators with provable convergence guarantees, and finally discuss structural interpretations and extensions beyond the classical Riemann zeta function.

Throughout, let
\[
q>p>1.
\]
The parameter \(q\) denotes the target zeta order, while \(p\) is a freely chosen base exponent used to construct the approximation family.

\subsection{Deficiency Identity and Structural Representation}

Define the base and target partial sums
\[
S_n^{(p)}:=\sum_{k=1}^{n}\frac{1}{k^p},
\qquad
T_n^{(q)}:=\sum_{k=1}^{n}\frac{1}{k^q}.
\]

Since \(p>1\), the sequence \(S_n^{(p)}\) converges monotonically to
\[
\lim_{n\to\infty}S_n^{(p)}=\zeta(p).
\]

The central comparison in this work is between the two quantities
\[
\bigl(S_n^{(p)}\bigr)^{q/p}
\qquad\text{and}\qquad
T_n^{(q)}.
\]

The first aggregates all mass before applying the nonlinear transformation \(x\mapsto x^{q/p}\), whereas the second transforms each term individually and then sums.

Let
\[
\alpha=\frac{q}{p}>1.
\]
Since \(x^\alpha\) is convex on \((0,\infty)\), Jensen-type superadditivity implies
\[
\left(\sum_{k=1}^{n}a_k\right)^\alpha
\ge
\sum_{k=1}^{n}a_k^\alpha
\qquad (a_k\ge0).
\]
Applying this with \(a_k=k^{-p}\) yields
\[
\bigl(S_n^{(p)}\bigr)^{q/p}\ge T_n^{(q)}.
\]

This motivates the central definition.

\begin{definition}[Finite Deficiency]
For \(q>p>1\), define
\[
D_n^{(p,q)}
:=
\bigl(S_n^{(p)}\bigr)^{q/p}-T_n^{(q)}.
\]
\end{definition}

Hence \(D_n^{(p,q)}\) measures the loss caused by fragmenting mass before nonlinear accumulation.

The deficiency admits an incremental form:
\[
D_n^{(p,q)}
=
\sum_{k=2}^{n}
\left[
\bigl(S_k^{(p)}\bigr)^{q/p}
-
\bigl(S_{k-1}^{(p)}\bigr)^{q/p}
-
k^{-q}
\right].
\]
Each increment quantifies the nonlinear surplus generated when the new contribution \(k^{-p}\) is merged into the accumulated pool.

\begin{theorem}[Exact Deficiency Identity]
\label{thm:exact}
For every \(q>p>1\),
\[
\boxed{
\zeta(q)=\zeta(p)^{q/p}-D_\infty^{(p,q)},
}
\]
where
\[
D_\infty^{(p,q)}:=\lim_{n\to\infty}D_n^{(p,q)}.
\]
\end{theorem}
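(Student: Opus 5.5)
The identity follows once we show that the limit defining \(D_\infty^{(p,q)}\) exists, and existence comes from rewriting the definition of \(D_n^{(p,q)}\). Solving the definition for the target partial sum gives, for every finite \(n\),
\[
T_n^{(q)}=\bigl(S_n^{(p)}\bigr)^{q/p}-D_n^{(p,q)}.
\]
The plan is to pass to the limit \(n\to\infty\) in each of the three terms separately. This requires the limits of \(T_n^{(q)}\) and \(\bigl(S_n^{(p)}\bigr)^{q/p}\) to exist and be finite; existence of \(\lim D_n^{(p,q)}\) then follows as a consequence rather than an assumption.

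First, since \(q>1\), the series \(\sum_k k^{-q}\) converges absolutely, so \(T_n^{(q)}\to\zeta(q)\). Second, since \(p>1\), we have \(S_n^{(p)}\to\zeta(p)\in(1,\infty)\), as already noted in the text. The map \(x\mapsto x^{q/p}\) is continuous on \((0,\infty)\), so \(\bigl(S_n^{(p)}\bigr)^{q/p}\to\zeta(p)^{q/p}\). Third, the displayed relation expresses \(D_n^{(p,q)}\) as a difference of two convergent sequences. Hence \(D_\infty^{(p,q)}=\lim_n D_n^{(p,q)}\) exists and equals \(\zeta(p)^{q/p}-\zeta(q)\), and rearranging gives the boxed identity.

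For completeness, we would also record that \(D_n^{(p,q)}\) is nonnegative by the superadditivity inequality established above. It is nondecreasing because each increment \(\bigl(S_k^{(p)}\bigr)^{q/p}-\bigl(S_{k-1}^{(p)}\bigr)^{q/p}-k^{-q}\) is nonnegative: apply the two-term inequality \((a+b)^\alpha\ge a^\alpha+b^\alpha\) with \(a=S_{k-1}^{(p)}\), \(b=k^{-p}\), \(\alpha=q/p\). So \(D_\infty^{(p,q)}\) is also the increasing limit of the series of nonnegative increments in the incremental form. This gives the interpretation \(0\le D_\infty^{(p,q)}\le \zeta(p)^{q/p}\) and shows the limit is finite independently of the identity.

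There is no serious obstacle in this argument. The only point needing care is not to assume a priori that \(D_\infty^{(p,q)}\) exists: its existence must be derived from the convergence of the other two terms, or alternatively from the monotone boundedness argument.
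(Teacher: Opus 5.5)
Your argument is correct and is the same as the paper's: both pass to the limit in $D_n^{(p,q)}=\bigl(S_n^{(p)}\bigr)^{q/p}-T_n^{(q)}$ using $S_n^{(p)}\to\zeta(p)$ and $T_n^{(q)}\to\zeta(q)$, so the existence and the value of $D_\infty^{(p,q)}$ come out together, and you state the continuity of $x\mapsto x^{q/p}$ explicitly where the paper uses it without comment. Your monotone-increment remark (via $(a+b)^{\alpha}\ge a^{\alpha}+b^{\alpha}$ with $a=S_{k-1}^{(p)}$, $b=k^{-p}$) is also correct, and it is what actually justifies the bound $D_n^{(p,q)}\le D_\infty^{(p,q)}$ that the paper asserts right after the proof.
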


\begin{proof}
By definition,
\[
D_n^{(p,q)}
=
\bigl(S_n^{(p)}\bigr)^{q/p}
-
\sum_{k=1}^{n}\frac{1}{k^q}.
\]
Since \(S_n^{(p)}\to\zeta(p)\) and \(T_n^{(q)}\to\zeta(q)\),
\[
D_n^{(p,q)}\to \zeta(p)^{q/p}-\zeta(q).
\]
Rearranging gives the identity.
\end{proof}

The theorem shows that \(\zeta(q)\) can be represented exactly as a nonlinear aggregate minus an accumulated structural correction.

Moreover,
\[
T_n^{(q)}
\le
\bigl(S_n^{(p)}\bigr)^{q/p}
\le
\zeta(p)^{q/p},
\]
hence
\[
0\le D_n^{(p,q)}\le D_\infty^{(p,q)}.
\]

\subsection{Approximation Estimators and Convergence Theory}

Truncating the infinite deficiency naturally yields the estimator
\[
A_n^{(p,q)}
:=
\zeta(p)^{q/p}-D_n^{(p,q)}.
\]
Using the definition of \(D_n^{(p,q)}\),
\[
A_n^{(p,q)}
=
T_n^{(q)}
+
\left[
\zeta(p)^{q/p}-\bigl(S_n^{(p)}\bigr)^{q/p}
\right].
\]
Thus the method equals the classical truncation plus a nonlinear correction term derived from the base sequence.

Define the base tail
\[
t_n^{(p)}:=\zeta(p)-S_n^{(p)}.
\]

\begin{lemma}[Tail Asymptotics]
\label{lem:tail}
For every \(p>1\),
\[
t_n^{(p)}
=
\frac{1}{(p-1)n^{p-1}}+O(n^{-p}).
\]
\end{lemma}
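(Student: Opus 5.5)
We compare the tail with the integral of the monotone function \(f(x)=x^{-p}\) and sharpen the two-sided bound quoted in the background section into an asymptotic with explicit remainder. Since \(f\) is positive and strictly decreasing on \([1,\infty)\), for each \(k\ge n+1\) we have
\[
\int_{k}^{k+1}x^{-p}\,dx\;\le\;k^{-p}\;\le\;\int_{k-1}^{k}x^{-p}\,dx .
\]
Summing over \(k\ge n+1\) gives
\[
\frac{1}{(p-1)(n+1)^{p-1}}\le t_n^{(p)}\le \frac{1}{(p-1)n^{p-1}}.
\]
Both sums converge because \(p>1\).

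The second step is to show that the lower bound differs from \(\frac{1}{(p-1)n^{p-1}}\) by \(O(n^{-p})\). By the mean value theorem applied to \(g(x)=x^{1-p}/(p-1)\), whose derivative is \(-x^{-p}\), we get
\[
\frac{1}{(p-1)n^{p-1}}-\frac{1}{(p-1)(n+1)^{p-1}}=\xi^{-p}\le n^{-p},
\qquad \xi\in(n,n+1).
\]
Hence
\[
0\le \frac{1}{(p-1)n^{p-1}}-t_n^{(p)}\le n^{-p},
\]
which is exactly \(t_n^{(p)}=\frac{1}{(p-1)n^{p-1}}+O(n^{-p})\), with implied constant \(1\), uniform in \(n\ge1\).

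If a sharper form is wanted later, for instance the next term \(\tfrac12 n^{-p}\) needed for bias correction, we would instead write
\[
t_n^{(p)}-\int_n^\infty x^{-p}\,dx=\sum_{k\ge n+1}\Bigl(k^{-p}-\int_{k-1}^{k}x^{-p}\,dx\Bigr).
\]
Each bracket is \(-\tfrac p2 k^{-p-1}+O(k^{-p-2})\) by Taylor expansion, and summing gives \(-\tfrac12 n^{-p}+O(n^{-p-1})\). For the lemma as stated, however, the elementary sandwich suffices.

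There is no real obstacle here. The only point requiring care is that the mean-value step gives a remainder of order \(n^{-p}\) rather than merely \(o(n^{1-p})\), and this is immediate from monotonicity of \(x^{-p}\).
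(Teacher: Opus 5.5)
Your proof is correct and takes the same route as the paper: the integral sandwich $\int_{n+1}^\infty x^{-p}\,dx \le t_n^{(p)} \le \int_n^\infty x^{-p}\,dx$. Your mean-value step makes explicit the $O(n^{-p})$ gap between the two bounds, which the paper leaves to ``evaluating both integrals.'' Your aside deriving the next term $-\tfrac12 n^{-p}$ is also correct, though the lemma does not need it.
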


\begin{proof}
By integral comparison,
\[
\int_{n+1}^{\infty}x^{-p}\,dx
\le t_n^{(p)}
\le \int_n^\infty x^{-p}\,dx.
\]
Evaluating both integrals gives the claim.
\end{proof}

Using
\[
S_n^{(p)}=\zeta(p)-t_n^{(p)},
\]
a first-order Taylor expansion gives
\[
\zeta(p)^{q/p}-\bigl(S_n^{(p)}\bigr)^{q/p}
=
\frac{q}{p}\zeta(p)^{q/p-1}t_n^{(p)}
+
O\!\left((t_n^{(p)})^2\right).
\]
Hence the estimator \(A_n^{(p,q)}\) has explicit leading bias.

We therefore define the bias-corrected estimator
\[
B_n^{(p,q)}
=
A_n^{(p,q)}
-
\frac{q}{p}\zeta(p)^{q/p-1}t_n^{(p)}.
\]

Equivalently,
\[
B_n^{(p,q)}
=
\zeta(p)^{q/p}
-
D_n^{(p,q)}
-
\frac{q}{p}\zeta(p)^{q/p-1}
\bigl(\zeta(p)-S_n^{(p)}\bigr).
\]

\begin{theorem}[Main Convergence Law]
\label{thm:rate}
For every \(q>p>1\),
\[
\boxed{
B_n^{(p,q)}-\zeta(q)
=
O\!\left(n^{-\min(2p-2,\;q-1)}\right).
}
\]
\end{theorem}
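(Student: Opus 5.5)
The plan is to reduce the error of \(B_n^{(p,q)}\) to two explicit pieces, the \(q\)-tail and a second-order Taylor remainder, and bound each separately. Since \(D_n^{(p,q)}=(S_n^{(p)})^{q/p}-T_n^{(q)}\), we have
\[
B_n^{(p,q)}=T_n^{(q)}+\Bigl[\zeta(p)^{q/p}-\bigl(S_n^{(p)}\bigr)^{q/p}-\tfrac{q}{p}\zeta(p)^{q/p-1}t_n^{(p)}\Bigr].
\]
Subtracting \(\zeta(q)=T_n^{(q)}+R_n^{(q)}\) then gives
\[
B_n^{(p,q)}-\zeta(q)=E_n-R_n^{(q)},
\]
where \(E_n\) denotes the bracketed Taylor remainder and \(R_n^{(q)}=\zeta(q)-T_n^{(q)}\) is the target tail.

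For the tail, the integral comparison recalled in Section~2 (equivalently, Lemma~\ref{lem:tail} applied with exponent \(q\)) gives \(0\le R_n^{(q)}\le \frac{1}{(q-1)n^{q-1}}\). Hence \(R_n^{(q)}=O(n^{-(q-1)})\).

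For \(E_n\), set \(f(x)=x^{\alpha}\) with \(\alpha=q/p>1\), \(x_0=\zeta(p)\) and \(h=t_n^{(p)}\ge0\), so that \(E_n=f(x_0)-f(x_0-h)-f'(x_0)h\). Taylor's theorem with Lagrange remainder gives
\[
E_n=-\tfrac12 f''(\xi_n)h^2
\]
for some \(\xi_n\in[S_n^{(p)},\zeta(p)]\subset[1,\zeta(p)]\), where we used \(S_n^{(p)}\ge S_1^{(p)}=1\). On this compact interval, which stays away from \(0\), we have \(f''(x)=\alpha(\alpha-1)x^{\alpha-2}\). This is bounded by a constant \(C_{p,q}\) (for example \(\alpha(\alpha-1)\max(1,\zeta(p)^{\alpha-2})\)), uniformly in \(n\), so \(|E_n|\le \tfrac12 C_{p,q}(t_n^{(p)})^2\). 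By Lemma~\ref{lem:tail}, \(t_n^{(p)}=O(n^{-(p-1)})\), and therefore \(E_n=O(n^{-(2p-2)})\).

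Combining the two bounds,
\[
|B_n^{(p,q)}-\zeta(q)|\le C n^{-(2p-2)}+C' n^{-(q-1)}=O\bigl(n^{-\min(2p-2,q-1)}\bigr),
\]
since the slower of the two decays dominates. The only delicate step is making the Taylor remainder uniform in \(n\): \(f''\) may blow up near \(0\) when \(\alpha<2\), so we need \(\xi_n\) confined to \([1,\zeta(p)]\), which the lower bound \(S_n^{(p)}\ge1\) ensures. The rest is routine, and no cancellation between \(E_n\) and \(R_n^{(q)}\) is needed for an upper bound.
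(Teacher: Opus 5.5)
Your proof is correct and takes the same route as the paper. Like the paper, you split $B_n^{(p,q)}-\zeta(q)$ into two pieces: the second-order Taylor remainder of $x\mapsto x^{q/p}$ at $\zeta(p)$, which is $O\bigl((t_n^{(p)})^2\bigr)=O(n^{-(2p-2)})$, and the target tail $R_n^{(q)}=O(n^{-(q-1)})$. Your Lagrange-remainder argument with $\xi_n\in[1,\zeta(p)]$ adds the uniform-in-$n$ constant that the paper's ``$O\bigl((t_n^{(p)})^2\bigr)$'' step leaves implicit.
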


\begin{proof}
After cancellation of the linear term,
\[
\zeta(p)^{q/p}
-
\bigl(S_n^{(p)}\bigr)^{q/p}
-
\frac{q}{p}\zeta(p)^{q/p-1}t_n^{(p)}
=
O((t_n^{(p)})^2).
\]
By Lemma~\ref{lem:tail},
\[
(t_n^{(p)})^2=O(n^{-(2p-2)}).
\]
The truncation tail satisfies
\[
\zeta(q)-T_n^{(q)}=O(n^{-(q-1)}).
\]
Combining both contributions proves the result.
\end{proof}

This law shows that convergence depends on competition between:

\begin{itemize}
\item residual nonlinear tail error \(n^{-(2p-2)}\),
\item direct target truncation error \(n^{-(q-1)}\).
\end{itemize}

To maximize the exponent, define
\[
r(p):=\min(2p-2,q-1).
\]

\begin{theorem}[Optimal Base Region]
\label{thm:optimal}
For fixed \(q>2\):

\begin{enumerate}
\item The maximal achievable exponent is \(q-1\).
\item Every
\[
p\ge \frac{q+1}{2}
\]
attains this optimum.
\item The balancing threshold is
\[
\boxed{
p_*=\frac{q+1}{2}.
}
\]
\end{enumerate}
\end{theorem}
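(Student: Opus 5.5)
This is an elementary optimization of the exponent function $r(p)=\min(2p-2,\,q-1)$ over the admissible range $1<p<q$, with $q>2$ fixed. The plan is to treat Theorem~\ref{thm:rate} as giving the rate exponent $r(p)$, so that all three claims reduce to properties of a minimum of two functions of $p$: one increasing linear function and one constant.

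First, the bound $r(p)\le q-1$ holds for every admissible $p$, because a minimum never exceeds either argument. This shows that no choice of $p$ gives an exponent above $q-1$. It is also the exponent of the unavoidable truncation tail $\zeta(q)-T_n^{(q)}$. By the integral comparison recalled in Section~2 (or Lemma~\ref{lem:tail} with $p$ replaced by $q$), that tail is of exact order $n^{-(q-1)}$, so the barrier is intrinsic and not an artifact of the bound.

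Second, we solve $2p-2\ge q-1$, which is equivalent to $p\ge (q+1)/2$. For such $p$ the minimum equals $q-1$, so the supremum in part~1 is attained and part~2 follows. We must check that this region meets the admissible range $(1,q)$. This holds because $q>2$ gives $(q+1)/2>3/2>1$, and $q>1$ gives $(q+1)/2<q$. So for instance $p=(q+1)/2$ itself, and also $p=q-1$ whenever $q\ge 3$, lie in the region.

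Third, for $p<(q+1)/2$ we have $r(p)=2p-2<q-1$, which is strictly increasing in $p$. Hence $p_*=(q+1)/2$ is exactly the point where the two error terms $n^{-(2p-2)}$ and $n^{-(q-1)}$ balance. It is the smallest $p$ attaining the optimum, which gives part~3.

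The only delicate point is interpreting "maximal achievable exponent". The argument above shows that $q-1$ is the maximum of the guaranteed exponent $r(p)$. To claim it is genuinely optimal, one would note that the error of $B_n^{(p,q)}$ contains the positive term $-(\zeta(q)-T_n^{(q)})\asymp n^{-(q-1)}$. This term cannot be cancelled by the $O((t_n^{(p)})^2)$ remainder except through a coincidence of exponents, so I would state the result at the level of the upper bound from Theorem~\ref{thm:rate}.
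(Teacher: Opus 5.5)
Your argument is correct and uses the same elementary balancing of the increasing term $2p-2$ against the constant $q-1$ that the paper gives in its appendix (Proposition~\ref{prop:optimal}); you solve $2p-2=q-1$ correctly as $p=(q+1)/2$ and check that the region meets $(1,q)$, whereas the paper's write-up concludes $p=q-1$, which is only one point of the optimal half-line and lies in it only when $q\ge 3$. Your closing hedge is unnecessary: the tangent-line inequality for the convex map $x\mapsto x^{q/p}$ at $\zeta(p)$ gives $B_n^{(p,q)}\le T_n^{(q)}$, so $\zeta(q)-B_n^{(p,q)}\ge \zeta(q)-T_n^{(q)}\ge \frac{1}{(q-1)(n+1)^{q-1}}$ for every admissible $p$ (the truncation term enters the error with negative sign, not positive, and the quadratic remainder has the same sign), so no cancellation can occur and $q-1$ is a genuine barrier rather than an artifact of the upper bound.
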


For odd targets \(q=2m+1\), the convenient explicit choice
\[
p=q-1=2m
\]
is especially attractive because \(\zeta(2m)\) is known in closed form.

Examples:
\[
\zeta(3)\leftarrow \zeta(2),\qquad
\zeta(5)\leftarrow \zeta(4),\qquad
\zeta(7)\leftarrow \zeta(6).
\]

A particularly simple universal strategy is \(p=2\), which yields
\[
B_n^{(2,q)}-\zeta(q)=O(n^{-2})
\qquad(q>2),
\]
using one common base sequence for all target orders.

Higher-order acceleration is also possible by subtracting additional powers of \(t_n^{(p)}\), producing a hierarchy of estimators.

\subsection{Interpretation, Applications, and Spectral Extensions}

The proposed method can be interpreted as an implicit Euler--Maclaurin mechanism.

Classical acceleration expands the target tail
\[
\sum_{k=n+1}^{\infty}k^{-q}.
\]
Our framework instead expands
\[
(\zeta(p)-t_n^{(p)})^{q/p},
\]
so correction terms arise automatically from nonlinear transformation of a simpler base tail.

This provides:

\begin{itemize}
\item a unified family of approximations for all \(q>p>1\),
\item explicit bias cancellation,
\item tunable tradeoffs through the base exponent \(p\),
\item reusable bases such as \(p=2\),
\item natural compatibility with high-precision arithmetic.
\end{itemize}

The framework also extends beyond the classical zeta function.

Let
\[
0<\lambda_1\le\lambda_2\le\cdots,\qquad \lambda_k\to\infty,
\]
and define the spectral zeta function
\[
\zeta_L(s)=\sum_{k=1}^{\infty}\lambda_k^{-s}.
\]

Set
\[
S_n^{(p)}:=\sum_{k=1}^{n}\lambda_k^{-p},
\]
and define
\[
D_n^{(p,q)}
=
\bigl(S_n^{(p)}\bigr)^{q/p}
-
\sum_{k=1}^{n}\lambda_k^{-q}.
\]

Passing to the limit gives the exact analogue
\[
\boxed{
\zeta_L(q)=\zeta_L(p)^{q/p}-D_\infty^{(p,q)}.
}
\]

Hence the same methodology applies to Laplacians, quantum Hamiltonians, graph spectra, and other operator-generated eigenvalue sequences.

If
\[
\lambda_k\sim k^\alpha,
\]
then the balancing threshold becomes
\[
p_*=\frac{\alpha q+1}{2\alpha},
\]
showing how optimal parameter selection adapts to spectral growth.

In summary, the deficiency framework converts nonlinear aggregation into a systematic approximation principle with exact identities, tunable convergence rates, and broad applicability beyond the classical Riemann zeta setting.

\section{Numerical Evaluation}
\label{sec:evaluation}

We now evaluate the proposed deficiency framework empirically. Our goals are threefold: (i) verify the convergence rates predicted by Theorem~\ref{thm:rate}, (ii) quantify the effect of the base exponent \(p\), and (iii) assess whether the methodology transfers beyond the classical Riemann zeta function.

All experiments were implemented in MATLAB using double-precision arithmetic. Reference values of \(\zeta(q)\) were computed using high-accuracy built-in routines. We report absolute error
\[
E_n=\left|\widehat{\zeta}(q)-\zeta(q)\right|.
\]
Unless otherwise stated, curves are shown on log--log axes, where polynomial decay appears approximately linear.

\subsection{Methods Compared}

We compare four representative estimators:

\begin{enumerate}[leftmargin=18pt]
\item \textbf{Dirichlet truncation}
\[
T_n^{(q)}=\sum_{k=1}^{n}k^{-q}.
\]

\item \textbf{Universal deficiency estimator} using the common base exponent \(p=2\).

\item \textbf{Optimized deficiency estimator} using the explicit choice
\[
p=q-1,
\]
which belongs to the optimal region \(p\ge (q+1)/2\).

\item \textbf{Euler--Maclaurin baseline}, implemented with two correction terms.
\end{enumerate}

This comparison isolates the tradeoff between structural simplicity, parameter tuning, and asymptotic efficiency.

\subsection{Experiment I: Approximation of \(\zeta(3)\)}

\begin{figure}[t]
\centering
\includegraphics[width=0.88\linewidth]{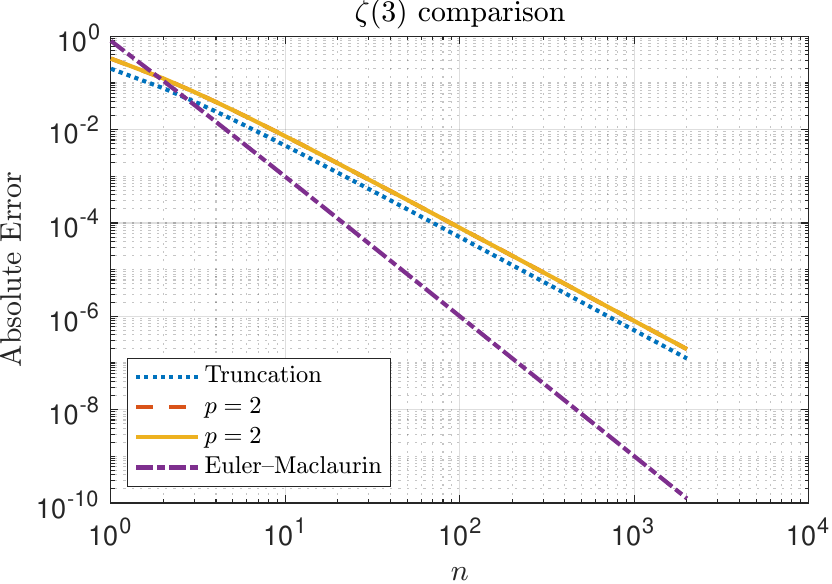}
\caption{Absolute approximation error for \(\zeta(3)\). The corrected deficiency estimator with \(p=2\) follows the predicted \(O(n^{-2})\) rate and improves finite-sample accuracy relative to the uncorrected form.}
\label{fig:q3}
\end{figure}

We begin with the first odd target:
\[
q=3,\qquad p=2.
\]

This case is canonical because \(p=2\) is both explicit and optimal:
\[
p_*=\frac{q+1}{2}=2.
\]

Figure~\ref{fig:q3} shows that the corrected estimator exhibits slope \(-2\), confirming
\[
B_n^{(2,3)}-\zeta(3)=O(n^{-2}).
\]

Although direct truncation has the same exponent, the deficiency correction improves constants in the practically relevant moderate-\(n\) regime. Thus even in the smallest nontrivial case, the nonlinear correction is meaningful.

\subsection{Experiment II: Approximation of \(\zeta(5)\)}

\begin{figure}[t]
\centering
\includegraphics[width=0.88\linewidth]{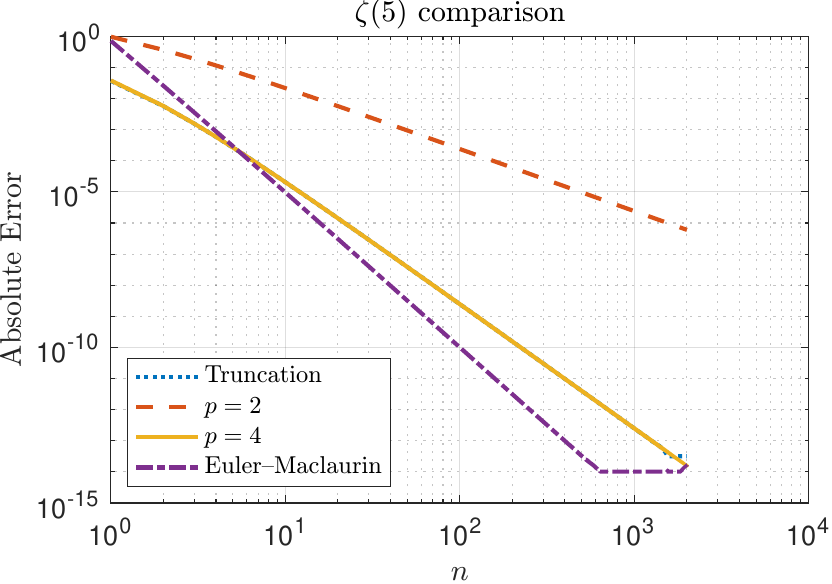}
\caption{Absolute approximation error for \(\zeta(5)\). Choosing \(p=4\) substantially improves convergence relative to the universal base \(p=2\).}
\label{fig:q5}
\end{figure}

We next study
\[
q=5,
\]
comparing the universal choice \(p=2\) against the optimized explicit choice \(p=4=q-1\).

Theory predicts:
\[
p=2 \;\Rightarrow\; O(n^{-2}),
\qquad
p=4 \;\Rightarrow\; O(n^{-4}).
\]

Figure~\ref{fig:q5} exhibits a clear separation between the two regimes. The optimized estimator follows a substantially steeper trajectory, consistent with fourth-order decay.

This experiment demonstrates that \(p\) is a genuine algorithmic design parameter rather than a cosmetic reformulation.

\subsection{Experiment III: Asymptotic Verification for \(\zeta(5)\)}

\begin{figure}[t]
\centering
\includegraphics[width=0.88\linewidth]{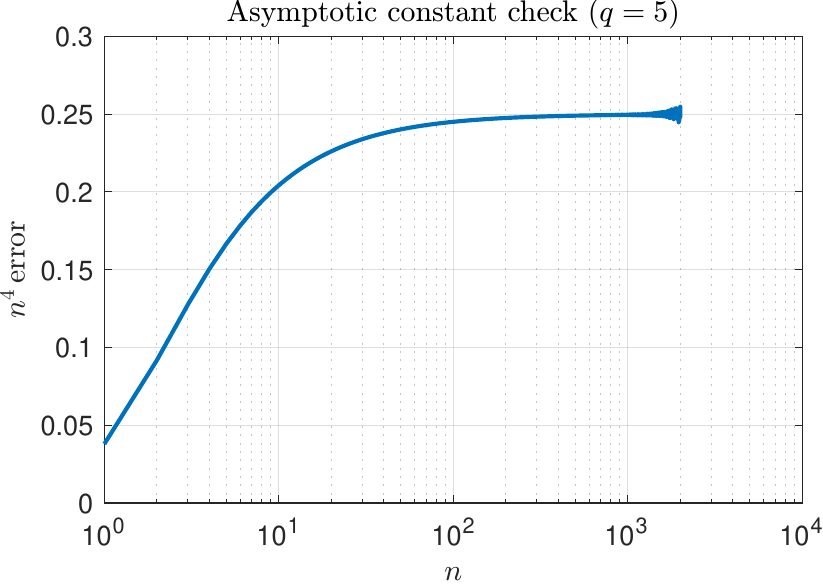}
\caption{Scaled error \(n^4|B_n^{(4,5)}-\zeta(5)|\). Stabilization to a plateau confirms the asymptotic law \(Cn^{-4}\).}
\label{fig:q5scaled}
\end{figure}

To verify the asymptotic model beyond slope estimates, Figure~\ref{fig:q5scaled} plots
\[
n^4|B_n^{(4,5)}-\zeta(5)|.
\]

If
\[
B_n^{(4,5)}-\zeta(5)\sim Cn^{-4},
\]
the scaled quantity should converge to a constant. This is precisely what is observed: the curve approaches a stable plateau over a broad numerical range.

Hence the theory captures not only the exponent but also the leading-order asymptotic structure.

\subsection{Experiment IV: Approximation of \(\zeta(7)\)}

\begin{figure}[t]
\centering
\includegraphics[width=0.88\linewidth]{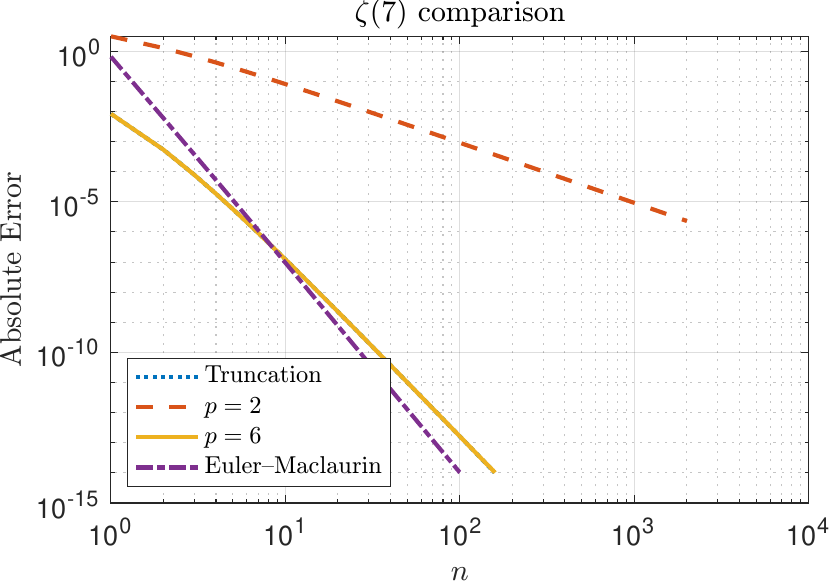}
\caption{Absolute approximation error for \(\zeta(7)\). The optimized estimator with \(p=6\) achieves the predicted sixth-order decay.}
\label{fig:q7}
\end{figure}

We then consider
\[
q=7,\qquad p=6=q-1.
\]

Theorem~\ref{thm:rate} predicts
\[
B_n^{(6,7)}-\zeta(7)=O(n^{-6}).
\]

Figure~\ref{fig:q7} confirms extremely rapid decay, with the optimized estimator decisively outperforming both the universal \(p=2\) construction and direct truncation.

As the target exponent increases, the value of tuning \(p\) becomes increasingly pronounced.

\subsection{Experiment V: Asymptotic Verification for \(\zeta(7)\)}

\begin{figure}[t]
\centering
\includegraphics[width=0.88\linewidth]{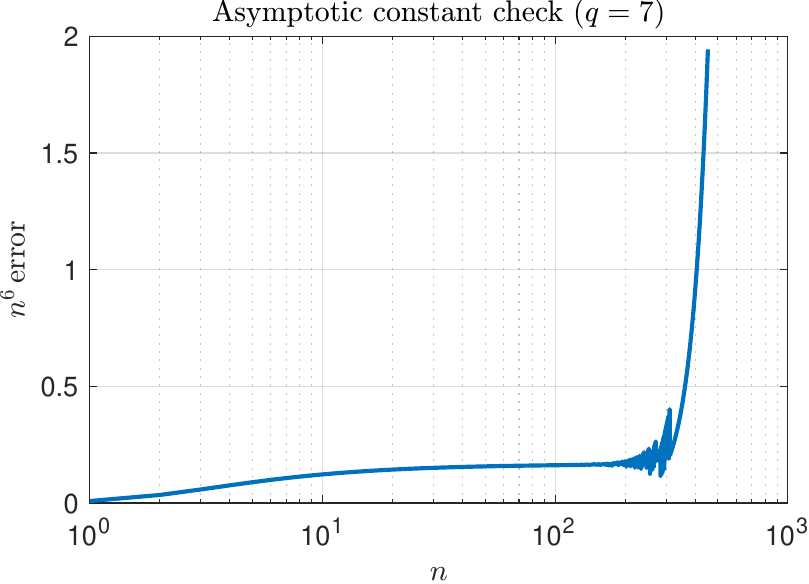}
\caption{Scaled error \(n^6|B_n^{(6,7)}-\zeta(7)|\). The plateau confirms sixth-order convergence before floating-point saturation.}
\label{fig:q7scaled}
\end{figure}

Figure~\ref{fig:q7scaled} plots
\[
n^6|B_n^{(6,7)}-\zeta(7)|.
\]

The emergence of a stable plateau verifies the full asymptotic relation
\[
B_n^{(6,7)}-\zeta(7)\sim Cn^{-6}.
\]

Minor deviations at very large \(n\) are attributable to double-precision limitations rather than methodological failure.

\subsection{Experiment VI: Spectral Zeta Functions}

\begin{figure*}[t]
\centering
\includegraphics[width=0.6\linewidth]{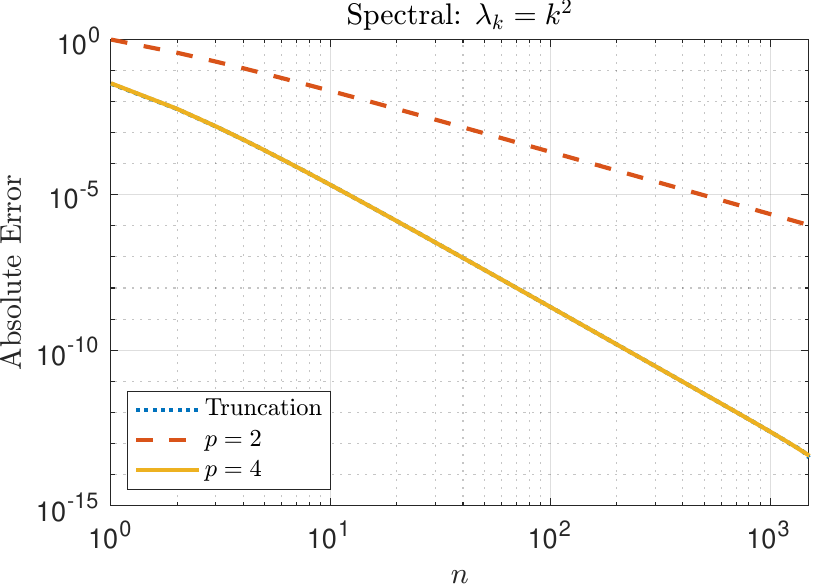}
\includegraphics[width=0.6\linewidth]{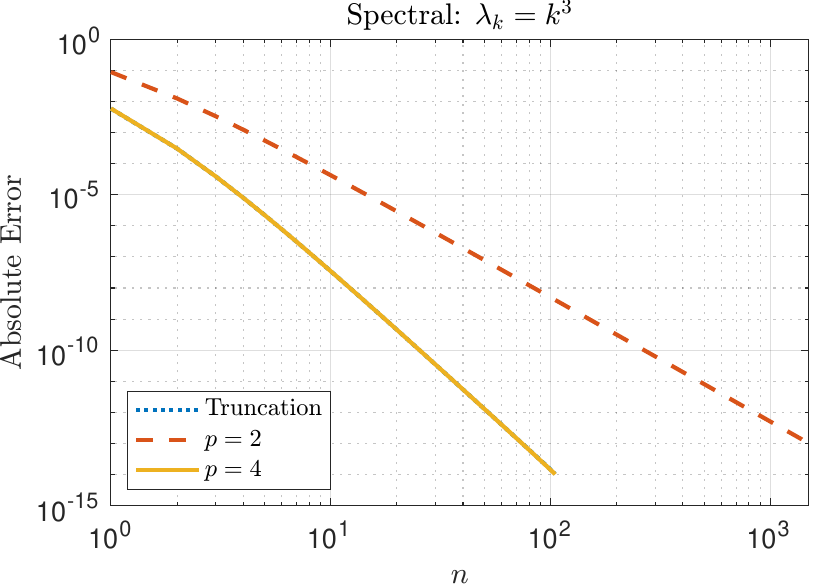}
\includegraphics[width=0.6\linewidth]{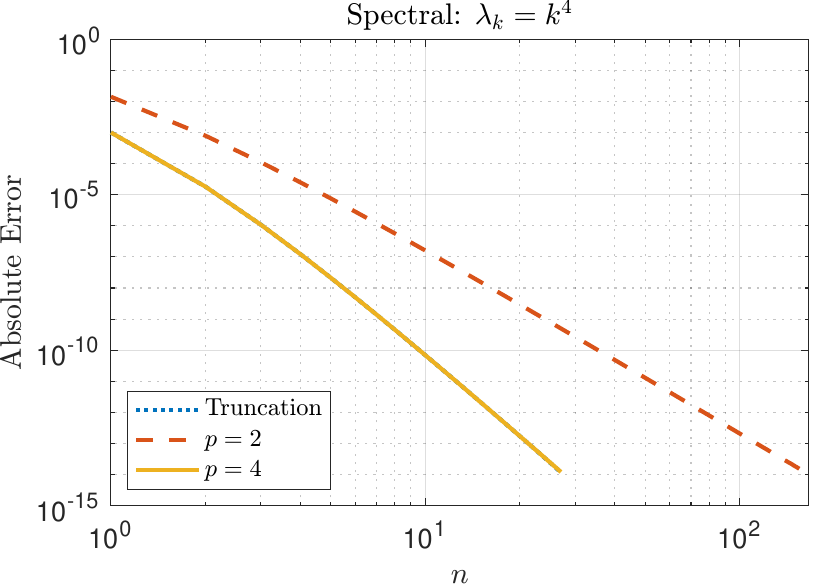}
\caption{Spectral experiments for \(\lambda_k=k^\alpha\), \(\alpha\in\{2,3,4\}\). The deficiency framework remains stable across multiple spectral growth regimes.}
\label{fig:spectral}
\end{figure*}

Finally, we test generalized spectra
\[
\lambda_k=k^\alpha,\qquad \alpha\in\{2,3,4\},
\]
for which
\[
\zeta_L(s)=\sum_{k=1}^{\infty}\lambda_k^{-s}.
\]

Figure~\ref{fig:spectral} shows that the same qualitative behavior persists across all growth regimes. Larger \(\alpha\) accelerates natural decay, but the deficiency methodology remains effective.

This supports the claim that the framework is not specific to the classical denominator sequence \(k\), but extends naturally to operator and spectral settings.

\subsection{Summary of Findings}

\begin{table}[t]
\centering
\caption{Observed behavior of optimized deficiency estimators.}
\label{tab:rates}
\begin{tabular}{cccc}
\toprule
Target & Base \(p\) & Predicted Rate & Observed \\
\midrule
\(\zeta(3)\) & 2 & \(O(n^{-2})\) & Matches \\
\(\zeta(5)\) & 4 & \(O(n^{-4})\) & Matches \\
\(\zeta(7)\) & 6 & \(O(n^{-6})\) & Matches \\
\bottomrule
\end{tabular}
\end{table}

Across all experiments, three conclusions emerge:

\begin{enumerate}[leftmargin=18pt]
\item The theoretical rate law is strongly supported empirically.
\item The base exponent \(p\) materially controls convergence quality.
\item The framework transfers naturally to nonclassical spectral sums.
\end{enumerate}

Overall, the experiments validate the deficiency identity as a practically meaningful and theoretically faithful approximation paradigm.

\bibliographystyle{unsrt}
\bibliography{refs}

\appendix

\section{Technical Lemmas and Asymptotic Expansions}

\subsection{Asymptotics of the Tail of $\zeta(p)$}

\begin{lemma}
\label{lem:tail}
Let $p>1$ and
\[
t_n^{(p)} := \zeta(p) - \sum_{k=1}^n \frac{1}{k^p}.
\]
Then
\[
t_n^{(p)} = \frac{1}{(p-1)n^{p-1}} + O(n^{-p}).
\]
\end{lemma}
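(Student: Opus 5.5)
The plan is to sandwich the tail between two integrals, using that $x\mapsto x^{-p}$ is positive and decreasing on $(0,\infty)$. For every integer $k\ge 1$,
\[
\int_{k}^{k+1}x^{-p}\,dx\;\le\;\frac{1}{k^{p}}\;\le\;\int_{k-1}^{k}x^{-p}\,dx .
\]
Summing over $k\ge n+1$ gives
\[
\int_{n+1}^{\infty}x^{-p}\,dx\le t_n^{(p)}\le\int_{n}^{\infty}x^{-p}\,dx .
\]
Termwise summation is legitimate because all terms are nonnegative and both integrals converge for $p>1$. This yields
\[
\frac{1}{(p-1)(n+1)^{p-1}}\le t_n^{(p)}\le\frac{1}{(p-1)n^{p-1}}.
\]

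Next I measure the width of this sandwich. Write $L_n=\frac{1}{(p-1)(n+1)^{p-1}}$ and $U_n=\frac{1}{(p-1)n^{p-1}}$. Then
\[
0\le U_n-t_n^{(p)}\le U_n-L_n=\frac{1}{p-1}\bigl(n^{1-p}-(n+1)^{1-p}\bigr).
\]
By the mean value theorem applied to $x\mapsto x^{1-p}$ on $[n,n+1]$, the last difference equals $(p-1)\xi^{-p}$ for some $\xi\in(n,n+1)$. Hence
\[
U_n-L_n=\xi^{-p}\le n^{-p}.
\]

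Therefore $\bigl|t_n^{(p)}-U_n\bigr|\le n^{-p}$ for all $n\ge1$, which is exactly the claim $t_n^{(p)}=\frac{1}{(p-1)n^{p-1}}+O(n^{-p})$, with implied constant $1$.

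The only point needing care is the second step. Lower and upper bounds of the same leading order do not by themselves give the $O(n^{-p})$ remainder; one must show their difference is of order $n^{-p}$. The mean value theorem settles this cleanly and uniformly in $n$, so I do not expect a genuine obstacle. (Alternatively, Euler–Maclaurin would even give the sharper $\frac{1}{2}n^{-p}$ term, but that is not needed.)
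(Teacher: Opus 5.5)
Your proof is correct and follows the same integral-comparison route as the paper. Your sandwich $\int_{n+1}^{\infty}x^{-p}\,dx\le t_n^{(p)}\le\int_{n}^{\infty}x^{-p}\,dx$ is the right one and matches the main-text version of this lemma, whereas the appendix's displayed lower bound $\int_{n}^{\infty}x^{-p}\,dx\le t_n^{(p)}$ is off by one index and actually false (e.g.\ $1>\pi^2/6-1$ for $p=2$, $n=1$). Your mean-value-theorem estimate $U_n-L_n=\xi^{-p}\le n^{-p}$ supplies exactly the step the paper skips when it says the comparison ``yields the stated expansion,'' and it also gives the explicit constant $1$.
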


\begin{proof}
This follows from standard integral comparison:
\[
\int_n^\infty x^{-p}dx \le t_n^{(p)} \le \int_{n-1}^\infty x^{-p}dx,
\]
which yields the stated expansion.
\end{proof}

---

\subsection{Taylor Expansion of Nonlinear Transform}

\begin{lemma}
\label{lem:taylor}
Let $a>1$ and $t_n \to 0$. Then
\[
(\zeta(p) - t_n)^a
=
\zeta(p)^a
- a\zeta(p)^{a-1} t_n
+ \frac{a(a-1)}{2}\zeta(p)^{a-2} t_n^2
+ O(t_n^3).
\]
\end{lemma}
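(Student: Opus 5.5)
Set $z:=\zeta(p)>0$ and write $f(x)=x^{a}$ on $(0,\infty)$. I would prove the expansion with Taylor's theorem for $f$ about the point $x_0=z$, using the Lagrange form of the remainder at order three. The constant $z$ lies strictly inside the domain, and $t_n\to 0$, so the points $z-t_n$ eventually stay in a fixed compact neighbourhood of $z$ that avoids $0$. That is the only fact needed for the $O(t_n^3)$ bound.

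\textbf{Localization.} Since $t_n\to0$, there is an $N$ with $|t_n|\le z/2$ for all $n\ge N$. For every such $n$, both $z-t_n$ and every point between $z$ and $z-t_n$ lie in the interval $I=[z/2,\,3z/2]\subset(0,\infty)$. (In our application $t_n=t_n^{(p)}>0$ by Lemma~\ref{lem:tail}, but the sign is not needed.)

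\textbf{Taylor with remainder.} On $I$ the function $f$ is $C^\infty$, with
\[
f'(x)=ax^{a-1},\qquad f''(x)=a(a-1)x^{a-2},\qquad f'''(x)=a(a-1)(a-2)x^{a-3}.
\]
Taylor's theorem with Lagrange remainder at $x_0=z$, applied with increment $-t_n$, gives for some $\xi_n$ between $z-t_n$ and $z$:
\[
(z-t_n)^a = z^a - a z^{a-1}t_n + \frac{a(a-1)}{2}z^{a-2}t_n^2 - \frac{f'''(\xi_n)}{6}t_n^3.
\]

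\textbf{Uniform bound on the remainder.} Since $\xi_n\in I$ and $x\mapsto x^{a-3}$ is continuous on the compact set $I$, we get
\[
|f'''(\xi_n)|\le C:=|a(a-1)(a-2)|\max_{x\in I}x^{a-3}<\infty .
\]
The maximum equals $(z/2)^{a-3}$ or $(3z/2)^{a-3}$, depending on the sign of $a-3$. Hence the remainder is at most $(C/6)|t_n|^3$ for $n\ge N$, which is exactly $O(t_n^3)$. Finitely many initial indices do not affect an asymptotic statement.

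\textbf{Where the care goes.} The only point needing care is that the constant is uniform: it must not depend on $n$ through $\xi_n$. The compact interval $I$, kept away from $0$, handles this, because $x^{a-3}$ could blow up near $0$ when $a<3$. The hypothesis $a>1$ is never used; the statement holds for any real $a$.
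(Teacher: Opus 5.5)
Your proof is correct and takes the same route as the paper, whose proof simply invokes a standard Taylor expansion of $x\mapsto x^a$ about $x=\zeta(p)$. You fill in what the paper leaves implicit: the Lagrange remainder, and a uniform bound on $f'''$ over the compact interval $[\zeta(p)/2,\,3\zeta(p)/2]$, which makes the $O(t_n^3)$ constant independent of $n$; your remark that the hypothesis $a>1$ is never used is also right.
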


\begin{proof}
This is a standard Taylor expansion of $x \mapsto x^a$ around $x=\zeta(p)$.
\end{proof}

---

\section{General $(p,q)$ Asymptotic Error}

\begin{theorem}
\label{thm:general}
Let $p>1$ and $q>p$. Define
\[
B_n^{(p,q)}
=
\zeta(p)^{q/p}
-
D_n^{(p,q)}
-
\frac{q}{p}\zeta(p)^{q/p-1}(\zeta(p)-S_n^{(p)}).
\]
Then
\[
B_n^{(p,q)} - \zeta(q)
=
O\!\left(n^{-\min(2p-2,\;q-1)}\right).
\]
\end{theorem}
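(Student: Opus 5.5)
The plan is to split the error of $B_n^{(p,q)}$ exactly into a nonlinear remainder and a truncation tail, and bound each separately. Write $Z:=\zeta(p)$, $a:=q/p>1$ and $t_n:=t_n^{(p)}=Z-S_n^{(p)}$. Substituting the definition $D_n^{(p,q)}=(S_n^{(p)})^{a}-T_n^{(q)}$ into $B_n^{(p,q)}$ and subtracting $\zeta(q)=T_n^{(q)}+R_n^{(q)}$ gives the exact decomposition
\[
B_n^{(p,q)}-\zeta(q)
=\Bigl[Z^{a}-(Z-t_n)^{a}-aZ^{a-1}t_n\Bigr]-R_n^{(q)},
\]
where $R_n^{(q)}=\sum_{k>n}k^{-q}$ is the target tail. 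No approximation enters at this stage.

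\textbf{The truncation term.} Lemma~\ref{lem:tail}, applied with $q$ in place of $p$, gives $R_n^{(q)}=O(n^{-(q-1)})$. Equivalently, one can use the integral bound $R_n^{(q)}\le \frac{1}{(q-1)n^{q-1}}$ recorded in Section~2.

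\textbf{The nonlinear term.} This is the step that needs care, because the $O(\cdot)$ in Lemma~\ref{lem:taylor} should be justified with a uniform constant rather than quoted. I would use Taylor's theorem with Lagrange remainder for $f(x)=x^{a}$ around $Z$. This gives
\[
Z^{a}-(Z-t_n)^{a}-aZ^{a-1}t_n=-\tfrac{a(a-1)}{2}\,\xi_n^{a-2}\,t_n^2
\]
for some $\xi_n$ between $S_n^{(p)}$ and $Z$. Since $1\le S_1^{(p)}\le S_n^{(p)}\le \xi_n\le Z$, the factor $\xi_n^{a-2}$ is bounded by $\max(1,Z^{a-2})$. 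This holds whether $a<2$ or $a\ge2$, because $\xi_n$ stays in the compact interval $[1,Z]$ away from $0$. Hence the bracket is bounded in absolute value by $C_{p,q}\,t_n^2$ for every $n\ge1$. By Lemma~\ref{lem:tail}, or directly by $0\le t_n\le \frac{1}{(p-1)(n-1)^{p-1}}$ for $n\ge2$, we get $t_n=O(n^{-(p-1)})$. It follows that $t_n^2=O(n^{-(2p-2)})$.

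\textbf{Conclusion.} Adding the two bounds gives $|B_n^{(p,q)}-\zeta(q)|\le C_1 n^{-(2p-2)}+C_2 n^{-(q-1)}\le (C_1+C_2)\,n^{-\min(2p-2,\,q-1)}$ for $n\ge1$. The only real obstacle is the uniform control of the second derivative of $x^{q/p}$ along the segment $[S_n^{(p)},Z]$. The monotone bound $S_n^{(p)}\ge1$ settles it without asymptotic hand-waving.
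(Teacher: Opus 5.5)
Your proof is correct and follows the same route as the paper: you split $B_n^{(p,q)}-\zeta(q)$ exactly into the second-order Taylor remainder of $x^{q/p}$ at $\zeta(p)$ and the target tail $-R_n^{(q)}$, then bound each term using the tail asymptotics. The only difference is that you use the Lagrange form with $\xi_n\in[1,\zeta(p)]$, which makes the constant uniform in $n$; the paper instead expands one order further (an explicit $t_n^2$ coefficient plus $O(t_n^3)$) and leaves that uniformity implicit.
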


\begin{proof}
Let
\[
t_n^{(p)} = \zeta(p) - S_n^{(p)}.
\]

Using Lemma~\ref{lem:taylor} with $a=q/p$,
\[
\zeta(p)^{q/p} - S_n^{(p)\,q/p}
=
\frac{q}{p}\zeta(p)^{q/p-1} t_n^{(p)}
-
\frac{q(q-p)}{2p^2}\zeta(p)^{q/p-2} t_n^{(p)\,2}
+ O(t_n^{(p)\,3}).
\]

Substituting into the estimator,
\[
B_n^{(p,q)} - \zeta(q)
=
-\frac{q(q-p)}{2p^2}\zeta(p)^{q/p-2} t_n^{(p)\,2}
- R_n^{(q)} + O(t_n^{(p)\,3}),
\]
where
\[
R_n^{(q)} = \zeta(q) - \sum_{k=1}^n \frac{1}{k^q}.
\]

By Lemma~\ref{lem:tail},
\[
t_n^{(p)} \sim n^{-(p-1)}, \quad
R_n^{(q)} \sim n^{-(q-1)}.
\]

Thus
\[
t_n^{(p)\,2} \sim n^{-(2p-2)}.
\]

Therefore the dominant term is
\[
\min(2p-2,\;q-1),
\]
which proves the result.
\end{proof}

---

\section{Optimal Choice of $p$}

\begin{proposition}
\label{prop:optimal}
For fixed $q>2$, the convergence rate
\[
\min(2p-2,\;q-1)
\]
is maximized at $p = q-1$.
\end{proposition}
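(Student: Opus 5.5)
The plan is to treat the exponent $r(p)=\min(2p-2,\,q-1)$ from Theorem~\ref{thm:general} (equivalently Theorem~\ref{thm:rate}) as an elementary function of $p$ on the admissible interval $1<p<q$, with $q>2$ fixed. I will identify its supremum and the set where it is attained, and then check whether $p=q-1$ lies in that set. Throughout, ``maximized at $p=q-1$'' is read as ``$p=q-1$ is a maximizer'', not as the unique one, since Theorem~\ref{thm:optimal} already shows the maximum is attained on a whole interval.

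\emph{Step 1: upper bound.} For every admissible $p$, $r(p)\le q-1$, trivially from the definition of the minimum. So no choice of $p$ beats the natural truncation exponent $q-1$. This matches part~1 of Theorem~\ref{thm:optimal}.

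\emph{Step 2: the maximizing set.} The map $p\mapsto 2p-2$ is strictly increasing. Hence $r(p)=q-1$ holds exactly when $2p-2\ge q-1$, that is, when $p\ge p_*=(q+1)/2$. For $p<p_*$ we have $r(p)=2p-2<q-1$. Note that $p_*<q$ whenever $q>1$, so the set $[p_*,q)$ is nonempty and the supremum $q-1$ is genuinely attained.

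\emph{Step 3: locating $p=q-1$.} First, $p=q-1$ is admissible, since $q-1>1$ because $q>2$, and $q-1<q$. It lies in the maximizing set iff $q-1\ge (q+1)/2$, which is equivalent to $q\ge 3$. For $q\ge3$ this gives $r(q-1)=\min(2q-4,\,q-1)=q-1$, the maximum, which proves the proposition in that range. In particular it covers the odd targets $q=2m+1$, $m\ge1$, used in the paper.

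\emph{Main obstacle.} The main obstacle is the range $2<q<3$, where Step~3 fails. There $r(q-1)=2q-4<q-1$, while any $p\in[(q+1)/2,q)$ achieves $q-1$. For example, $q=5/2$ gives $r(3/2)=1$, whereas $p=7/4$ gives $r(7/4)=3/2$. So the claim as literally stated is false on $(2,3)$. I would therefore prove the proposition under the hypothesis $q\ge3$, or, for all $q>2$, in the corrected form ``the maximum $q-1$ is attained exactly for $p\in[(q+1)/2,q)$, and this set contains $q-1$ iff $q\ge3$''. I would record the counterexample explicitly as a remark.
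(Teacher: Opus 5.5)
Your argument is correct, and it is sounder than the paper's own proof. The paper argues that, since $2p-2$ is increasing, $\min(2p-2,q-1)$ is maximized when $2p-2=q-1$, ``which yields $p=q-1$''. But that equation actually gives $p=(q+1)/2=p_*$, not $p=q-1$. So the printed proof only locates the balancing threshold of Theorem~\ref{thm:optimal} and never checks that $p=q-1$ attains the maximum.

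Your route supplies exactly the missing step. You bound $r(p)\le q-1$, identify the full maximizing set $[(q+1)/2,\,q)$, and then test whether $q-1$ belongs to it, which holds iff $q-1\ge (q+1)/2$, i.e.\ iff $q\ge 3$. Your example $q=5/2$, where $r(3/2)=1<3/2=r(7/4)$, correctly shows the proposition is false as stated on $2<q<3$.

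Reading ``maximized at'' as ``is a maximizer'' is also forced, since the maximizing set is a nondegenerate interval for every $q>2$ (already $[2,3)$ when $q=3$). So the defensible statement is yours. For $q\ge 3$, which covers all the odd targets $q=2m+1$ used in the paper, $p=q-1$ is one maximizer among all $p\in[(q+1)/2,\,q)$; for $2<q<3$ it is not.

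The only detail to add is that $p_*=(q+1)/2>1$, so $[p_*,q)$ really lies in the admissible range $1<p<q$.
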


\begin{proof}
The function $2p-2$ increases linearly in $p$. The minimum
\[
\min(2p-2,\;q-1)
\]
is maximized when
\[
2p-2 = q-1,
\]
which yields $p = q-1$.
\end{proof}

---

\section{Convexity of the Deficiency Functional}

\begin{lemma}
\label{lem:convex}
For $q>2$, the function $f(x)=x^{q/p}$ is convex on $(0,\infty)$. Hence
\[
(S_k^{(p)})^{q/p} \ge (S_{k-1}^{(p)})^{q/p} + (S_k^{(p)}-S_{k-1}^{(p)})^{q/p},
\]
and the deficiency terms are nonnegative.
\end{lemma}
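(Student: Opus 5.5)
The hypothesis ``$q>2$'' in the statement is not what makes $x^{q/p}$ convex. The relevant condition is $\alpha:=q/p>1$, which holds throughout since $q>p>1$. I will therefore prove the lemma under the standing assumption $q>p>1$ and note that the displayed condition is not used. Convexity is elementary: for $x>0$ we have $f''(x)=\alpha(\alpha-1)x^{\alpha-2}>0$ because $\alpha>1$. For the inequality itself I will rely on superadditivity rather than convexity directly. A convex $f$ on $[0,\infty)$ with $f(0)=0$ satisfies $f(a+b)\ge f(a)+f(b)$ for all $a,b\ge0$.

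\emph{Step 1 (superadditivity).} Fix $a,b>0$, so that $0<\tfrac{a}{a+b}<1$. Since $\alpha>1$, we have $(\tfrac{a}{a+b})^\alpha\le \tfrac{a}{a+b}$, and likewise $(\tfrac{b}{a+b})^\alpha\le\tfrac{b}{a+b}$. Adding these gives
\[
\Bigl(\tfrac{a}{a+b}\Bigr)^\alpha+\Bigl(\tfrac{b}{a+b}\Bigr)^\alpha\le 1 .
\]
Multiplying through by $(a+b)^\alpha$ yields $a^\alpha+b^\alpha\le (a+b)^\alpha$. The cases $a=0$ or $b=0$ are trivial.

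\emph{Step 2 (apply to the partial sums).} For $k\ge2$, set $a=S_{k-1}^{(p)}$ and $b=S_k^{(p)}-S_{k-1}^{(p)}=k^{-p}$. Step 1 then gives the displayed inequality
\[
\bigl(S_k^{(p)}\bigr)^{q/p}\ge \bigl(S_{k-1}^{(p)}\bigr)^{q/p}+k^{-q},
\]
using $(k^{-p})^{q/p}=k^{-q}$.

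\emph{Step 3 (nonnegativity of the deficiency terms).} Rearranging the inequality from Step 2 shows that each increment
\[
\bigl(S_k^{(p)}\bigr)^{q/p}-\bigl(S_{k-1}^{(p)}\bigr)^{q/p}-k^{-q}
\]
appearing in the incremental form of $D_n^{(p,q)}$ is nonnegative. The incremental form itself follows by telescoping: the $k=1$ contribution vanishes because $S_1^{(p)}=1=1^{-q}$. Summing these increments gives $D_n^{(p,q)}\ge0$, and shows that $D_n^{(p,q)}$ is nondecreasing in $n$. This is consistent with the bound $0\le D_n^{(p,q)}\le D_\infty^{(p,q)}$ stated earlier.

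None of these steps poses a real obstacle. The only point requiring care is reading the hypothesis correctly: the argument rests on $q/p>1$, not on $q>2$.
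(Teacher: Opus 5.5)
Your proof is correct. Your reading of the hypothesis also agrees with what the paper's proof actually uses: its entire argument is that $f''(x)=\frac{q}{p}\bigl(\frac{q}{p}-1\bigr)x^{q/p-2}>0$ when $q/p>1$, so $q>2$ plays no role.

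The difference lies in how the displayed inequality is reached. The paper stops at convexity and moves to superadditivity and nonnegativity of the increments with a bare ``hence''. The step it leaves implicit is the chord argument for a convex $f$ on $[0,\infty)$ with $f(0)=0$, i.e.\ $f(a)\le\frac{a}{a+b}f(a+b)$ and $f(b)\le\frac{b}{a+b}f(a+b)$. That argument also needs $x^{q/p}$ extended continuously to $0$. You bypass convexity instead: normalizing by $a+b$ and using homogeneity of the power map with $x^\alpha\le x$ on $[0,1]$ gives $a^\alpha+b^\alpha\le(a+b)^\alpha$ directly. Your second-derivative computation therefore serves only the lemma's first claim. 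You also verify the telescoping behind the incremental form of $D_n^{(p,q)}$ (via $S_1^{(p)}=1$), which the paper takes for granted.

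The paper's route is more general, since any convex function vanishing at $0$ is superadditive. Yours is self-contained and tailored to powers, which is all that is needed here.

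Your Step 3 has a side benefit worth keeping. The monotonicity of $D_n^{(p,q)}$ in $n$ is exactly what justifies the bound $D_n^{(p,q)}\le D_\infty^{(p,q)}$ in Section~\ref{sec:methodology}. The chain $T_n^{(q)}\le(S_n^{(p)})^{q/p}\le\zeta(p)^{q/p}$ offered there only yields $D_n^{(p,q)}\le\zeta(p)^{q/p}-T_n^{(q)}$, and that quantity exceeds $D_\infty^{(p,q)}$.
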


\begin{proof}
The second derivative of $f(x)$ is positive for $q/p>1$, proving convexity.
\end{proof}

---

\section{Spectral Extension Conditions}

\begin{proposition}
\label{prop:spectral}
Let $\lambda_k \sim k^\alpha$ with $\alpha>0$. Then the spectral deficiency framework is valid provided
\[
\frac{p\alpha}{2} > 1, \qquad \frac{q\alpha}{2} > 1.
\]
\end{proposition}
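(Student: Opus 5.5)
We read ``the spectral deficiency framework is valid'' as three claims. First, \(\zeta_L(p)\) and \(\zeta_L(q)\) converge. Second, the exact analogue \(\zeta_L(q)=\zeta_L(p)^{q/p}-D_\infty^{(p,q)}\) holds with \(0\le D_n^{(p,q)}\le D_\infty^{(p,q)}<\infty\). Third, the bias-corrected estimator \(B_n^{(p,q)}\), built exactly as in Theorem~\ref{thm:general} with \(S_n^{(p)}=\sum_{k\le n}\lambda_k^{-p}\), converges to \(\zeta_L(q)\) with an explicit polynomial rate. The plan is to transfer the arguments of Theorem~\ref{thm:exact} and Theorem~\ref{thm:general} verbatim. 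The only inputs that change are the two tail lemmas, which we re-derive from the growth hypothesis \(\lambda_k\sim k^\alpha\).

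\textbf{Step 1 (two-sided comparison).} From \(\lambda_k\sim k^\alpha\) we fix \(k_0\) with \(\tfrac12 k^\alpha\le\lambda_k\le 2k^\alpha\) for \(k\ge k_0\). Then for \(s\in\{p,q\}\),
\[
\sum_{k>n}\lambda_k^{-s}\asymp\sum_{k>n}k^{-\alpha s}.
\]
By the integral comparison of Lemma~\ref{lem:tail} (applied with exponent \(\alpha s\)), this is \(\asymp n^{1-\alpha s}\) provided \(\alpha s>1\). The hypotheses \(p\alpha/2>1\) and \(q\alpha/2>1\) give \(\alpha p>2>1\) and \(\alpha q>2\), so both spectral zeta values are finite. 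We obtain the tail bounds
\[
t_n^{(p)}:=\zeta_L(p)-S_n^{(p)}=O(n^{1-\alpha p}),\qquad R_n^{(q)}=O(n^{1-\alpha q}).
\]

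\textbf{Step 2 (identity and positivity).} Superadditivity of \(x\mapsto x^{q/p}\) (Lemma~\ref{lem:convex}) applied to \(a_k=\lambda_k^{-p}\) gives \(D_n^{(p,q)}\ge0\). Monotonicity of \(S_n^{(p)}\) gives \(D_n^{(p,q)}\le\zeta_L(p)^{q/p}\). Continuity of \(x^{q/p}\) then lets us pass to the limit exactly as in the proof of Theorem~\ref{thm:exact}, which yields the boxed spectral identity.

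\textbf{Step 3 (rate).} We apply Lemma~\ref{lem:taylor} with \(a=q/p\) around \(\zeta_L(p)>0\); this is legitimate since \(t_n^{(p)}\to0\) by Step 1. Repeating the cancellation in Theorem~\ref{thm:general} gives
\[
B_n^{(p,q)}-\zeta_L(q)=-\tfrac{q(q-p)}{2p^2}\zeta_L(p)^{q/p-2}\bigl(t_n^{(p)}\bigr)^2-R_n^{(q)}+O\bigl((t_n^{(p)})^3\bigr),
\]
and hence
\[
B_n^{(p,q)}-\zeta_L(q)=O\bigl(n^{-\min(2\alpha p-2,\;\alpha q-1)}\bigr).
\]
Here the stated hypotheses give quantitative content. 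Since \(\alpha p>2\), the squared tail decays at least like \(n^{-2}\), which is strictly faster than the first-order bias \(n^{1-\alpha p}\) that was removed, so the correction is a genuine improvement. Since \(\alpha q>2\), the truncation error decays at least like \(n^{-1}\). Equating the two exponents recovers the threshold \(p_*=(\alpha q+1)/(2\alpha)\) quoted in the main text.

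\textbf{Main obstacle.} The delicate point is Step 1: the hypothesis \(\lambda_k\sim k^\alpha\) is only asymptotic. The finitely many indices \(k<k_0\) affect \(S_n^{(p)}\) and \(\zeta_L(p)\) but not the tails, so they must be absorbed into the constants rather than into the rates. One should also note that the stated conditions are sufficient but not sharp for the identity alone, which only needs \(\alpha p>1\) and \(\alpha q>1\). We would therefore present \(p\alpha/2>1\) and \(q\alpha/2>1\) as the conditions under which both the identity and a nontrivial corrected rate hold. If a sharp two-sided asymptotic for \(t_n^{(p)}\) is wanted rather than an \(O\)-bound, we would use the bounds \(\tfrac12k^\alpha\le\lambda_k\le2k^\alpha\) with constants tending to \(1\), which yields \(t_n^{(p)}\sim n^{1-\alpha p}/(\alpha p-1)\).
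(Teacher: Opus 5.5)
Your proposal is correct. It follows the same skeleton as the paper, whose proof is a single remark: the hypotheses guarantee convergence of \(\sum\lambda_k^{-p/2}\) and \(\sum\lambda_k^{-q/2}\), and the rest ``follows identically from the discrete case.''

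Your version differs in two worthwhile respects.

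\textbf{Which sums are checked.} You check convergence of the sums the framework actually uses, \(\sum\lambda_k^{-p}\) and \(\sum\lambda_k^{-q}\). The half-power sums cited in the paper never enter the construction. The stated conditions are exactly the convergence criteria for those half-power sums, which is why they are stronger than needed. Your remark that \(\alpha p>1\) and \(\alpha q>1\) suffice is right. In any case \(q\alpha/2>1\) is already implied by \(p\alpha/2>1\), since \(q>p\).

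\textbf{The rate.} By re-deriving the tail bounds with exponent \(\alpha s\), you show the transfer is not literally ``identical.'' The rate becomes \(O\bigl(n^{-\min(2\alpha p-2,\,\alpha q-1)}\bigr)\). This is the form consistent with the threshold \(p_*=(\alpha q+1)/(2\alpha)\) stated in the main text.

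The paper's route buys only brevity; yours yields a checkable argument with the correct exponents.

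One small repair: the bound \(D_n^{(p,q)}\le\zeta_L(p)^{q/p}\) does not give \(D_n^{(p,q)}\le D_\infty^{(p,q)}\), which you list among your claims. For that you need \(D_n^{(p,q)}\) to be nondecreasing in \(n\). This follows from the incremental inequality of Lemma~\ref{lem:convex}, i.e.\ superadditivity applied to \(S_{k-1}^{(p)}\) and \(\lambda_k^{-p}\).
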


\begin{proof}
These conditions ensure convergence of
\[
\sum \lambda_k^{-p/2}, \qquad \sum \lambda_k^{-q/2}.
\]
The remainder of the argument follows identically from the discrete case.
\end{proof}
\section{MATLAB Code for the Corrected Deficiency Estimator}

\begin{verbatim}
clc; clear; close all;

Nmax = 5000;
odd_orders = 3:2:19;

Sinf = pi^2/6;
Sn = zeros(1,Nmax);

for n = 1:Nmax
    Sn(n) = sum(1./(1:n).^2);
end

num_orders = length(odd_orders);
errors = zeros(num_orders, Nmax);

figure; hold on;
colors = lines(num_orders);

for idx = 1:num_orders
    q = odd_orders(idx);
    zq = zeta(q);

    Dn = zeros(1,Nmax);
    for n = 2:Nmax
        incr = Sn(n)^(q/2) - Sn(n-1)^(q/2) ...
             - (Sn(n)-Sn(n-1))^(q/2);
        Dn(n) = Dn(n-1) + incr;
    end

    Cq = (q/2) * Sinf^(q/2 - 1);
    B = Sinf^(q/2) - Dn - Cq*(Sinf - Sn);

    err = abs(B - zq);
    errors(idx,:) = err;

    loglog(1:Nmax, err, 'LineWidth', 2, 'Color', colors(idx,:));
end

grid on;
xlabel('n');
ylabel('Absolute error');
title('Corrected deficiency estimator for odd zeta values');
legend(arrayfun(@(q) sprintf('\\zeta(%d)', q), odd_orders, ...
    'UniformOutput', false), 'Location', 'southwest');
\end{verbatim}

\end{document}